\newtheorem{THM}{Theorem}
\newtheorem{LMA}[THM]{Lemma}
\newtheorem{PROP}[THM]{Proposition}
\numberwithin{equation}{section}
\newcommand{\pix}[2]{\begin{center}
{\includegraphics[width=#1\textwidth]{#2}}
\end{center}}
\newcommand{\showon}{\begin{eqnarray*}}
\newcommand{\showoff}{\end{eqnarray*}}
\newcommand{\drop}{\smallsetminus}
\newcommand{\F}{\EuScript{F}}
\newcommand{\M}{\EuScript{M}}
\renewcommand{\P}{\EuScript{P}}
 \newcommand{\RR}{\mathbb{R}}
\newcommand{\V}{\EuScript{V}}
\newcommand{\W}{\EuScript{W}}
 \newcommand{\y}{\mathbf{y}}
\begin{document}

\title[Half-plane property matroids]{A criterion for the half-plane property}

\author{David G. Wagner}
\thanks{Research supported by NSERC Discovery Grant OGP0105392.}
\author{Yehua Wei}
\thanks{Research supported by an NSERC Summer Undergraduate Research Award.}
\address{Department of Combinatorics and Optimization\\
University of Waterloo\\
Waterloo, Ontario, Canada\ \ N2L 3G1}
\email{\texttt{dgwagner@math.uwaterloo.ca}}
\email{\texttt{y4wei@student.math.uwaterloo.ca}}

\keywords{matroid, half-plane property, Rayleigh property,
stable polynomial, Hurwitz stable polynomial.}
\subjclass{05B35, 05A20, 05E99, 26C10}

\begin{abstract}
We establish a convenient necessary and sufficient condition for a
multiaffine real polynomial to be stable, and use it to verify that
the half-plane property holds for seven small matroids that resisted the
efforts of Choe, Oxley, Sokal, and Wagner \cite{COSW}.
\end{abstract}

\maketitle

In recent years, matroid theory has found connections with certain
analytic properties of real multivariate polynomials.
These properties are abstractions of physical characteristics of an electrical network.  Not all matroids exhibit the same physically sensible behaviour that
graphs do.  It is an interesting (and often challenging) problem to determine 
whether a given matroid satisfies one or another of these physically-motivated
conditions.  In this paper we deduce a convenient necessary and sufficient
criterion (Theorem 3(c)) for the ``strong Rayleigh property'', and use it to
verify this property for some small matroids, among them the V\'amos cube $\V_8$.
This supplements Br\"and\'en's result \cite{Br} that the strong Rayleigh
property is equivalent to the ``half-plane property'', and resolves some
questions left open by Choe, Oxley, Sokal, and Wagner \cite{COSW}.\\

Let $Z(y_1,...,y_m)$ be a polynomial with real coefficients, and let
$E=\{1,...,m\}$.  The polynomial $Z$ has the \emph{half-plane property (HPP)}
or is \emph{Hurwitz stable} provided that
whenever $\mathrm{Re}(y_e)>0$ for all $e\in E$ then $Z(y_1,...,y_m)\neq 0$.
The polynomial $Z$ is \emph{stable} provided that
whenever $\mathrm{Im}(y_e)>0$ for all $e\in E$ then $Z(y_1,...,y_m)\neq 0$.
Note that a homogeneous polynomial $Z$ is stable if and only if
it is Hurwitz stable.  If every variable $y_e$ for $e\in E$ occurs in $Z$ to
at most the first power, then the polynomial $Z$ is \emph{multiaffine}.
For any index $e\in E$ let
$$Z_e = \frac{\partial}{\partial y_e}Z$$ 
be the \emph{contraction of $e$ in $Z$}, and let
$$Z^e = \left. Z\right|_{y_e=0}$$
be the \emph{deletion of $e$ from $Z$}.  This notation is extended to
multiple (distinct) sub- and super-scripts in the obvious way.
For distinct indices  $e,f\in E$ let
$$\Delta Z\{e,f\} = Z_e Z_f - Z_{ef} Z.$$
We refer to this as the \emph{Rayleigh difference of $\{e,f\}$ in $Z$}.
If $Z$ is multiaffine then $Z = Z^e + y_e Z_e$ for any $e\in E$, and
it follows that
$$\Delta Z\{e,f\} = Z_e^f Z_f^e - Z_{ef} Z^{ef}.$$
The polynomial $Z$ has the \emph{Rayleigh property} provided that
whenever $y_c>0$ for all $c\in E$, then for all $\{e,f\}\subseteq E$,
$\Delta Z\{e,f\}\geq 0$.  The polynomial $Z$ has the \emph{strong 
Rayleigh property} provided that whenever $y_c\in\RR$ for all $c\in E$,
then for all $\{e,f\}\subseteq E$, $\Delta Z\{e,f\}\geq 0$.

The motivation for these definitions is discussed in \cite{COSW,Wa1}.
The HPP and stability are studied in \cite{BBL,Br,C1,C2,COSW,Wa1},
and the Rayleigh property is studied in \cite{C1,CW,Wa1,Wa2,Wa3}.
Clearly the strong Rayleigh property implies the Rayleigh property.
In \cite{Br}, Br\"and\'en proves that for multiaffine real polynomials
the strong Rayleigh property is equivalent to stability.
We give another criterion equivalent to these in Theorem 3(c).

Now, let $Z(y_1,...,y_m)$ be a multiaffine real polynomial.
To investigate the strong Rayleigh property,
consider any pair $\{e,f\}$ in $E$, and any third index $g\in E\drop\{e,f\}$.
Since $Z$ is multiaffine, $\Delta Z\{e,f\}$ is at most quadratic in $y_g$.
A short calculation shows that
$$\Delta Z\{e,f\} = A y_g^2 + B y_g + C$$
in which
$$A= \Delta Z_g\{e,f\} = Z_{eg}^f Z_{fg}^e - Z_{efg} Z_g^{ef}$$
and
$$B= Z_e^{fg}Z_{fg}^e + Z_{f}^{eg}Z_{eg}^f - Z_g^{ef}Z_{ef}^g - Z_{efg}Z^{efg}$$
and
$$C= \Delta Z^g\{e,f\} =  Z_{e}^{fg} Z_{f}^{eg} - Z_{ef}^g Z^{efg}.$$
If $Z$ has the strong Rayleigh property then for any real values of
$y_c$ ($c\in E\drop\{e,f,g\}$) the polynomial $\Delta Z\{e,f\}$ is nonnegative
for all real values of $y_g$.  Since this quadratic polynomial in $y_g$
does not change sign, its discriminant is nonpositive:\ $B^2-4AC\leq 0$.
In fact, this discriminant has a surprising feature that can be put to good use.

\begin{PROP}
Let $Z(y_1,...,y_m)$ be a multiaffine polynomial, and let $e,f,g$ be distinct indices.
Then the discriminant of $\Delta Z\{e,f\}$ with respect to $y_g$ is symmetric
under all permutations of the indices $\{e,f,g\}$.
\end{PROP}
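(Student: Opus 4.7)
The plan is to fix all variables other than $y_e, y_f, y_g$ and expand $Z$ as a multilinear polynomial in these three,
$$Z \;=\; \sum_{(i,j,k)\in\{0,1\}^3} a_{ijk}\; y_e^{\,i}\, y_f^{\,j}\, y_g^{\,k},$$
so that each of the eight coefficients $a_{ijk}$ is obtained from $Z$ by contracting the indices corresponding to $1$s and deleting those corresponding to $0$s; for instance $a_{000}=Z^{efg}$, $a_{100}=Z_e^{fg}$, $a_{110}=Z_{ef}^{g}$, and $a_{111}=Z_{efg}$. Substituting into $\Delta Z\{e,f\}=Z_e^f Z_f^e - Z_{ef}Z^{ef}$ and collecting by powers of $y_g$ expresses $A$, $B$, $C$ as explicit quadratic polynomials in the eight unknowns $a_{ijk}$, matching the formulas in the paper.

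Symmetry of the discriminant under the transposition $(e\ f)$ is free, since $\Delta Z\{e,f\}$ is already manifestly symmetric in $e$ and $f$ and hence so is each of $A, B, C$ as a polynomial in the $a_{ijk}$. Because $(e\ f)$ and $(e\ g)$ generate the symmetric group on $\{e,f,g\}$, it suffices to verify invariance of $B^2-4AC$ under the transposition $(e\ g)$, which permutes the $a_{ijk}$ by interchanging the first and third entries of their indices. Writing $A',B',C'$ for the images of $A,B,C$ under this action, the required statement reduces to the single polynomial identity
$$B^2 - B'^2 \;=\; 4\,(AC - A'C').$$

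The key observation, and the one that organises the whole calculation, is that both sides factor through the same pair of $2\times 2$ minors of the hypermatrix $(a_{ijk})$. A direct expansion gives
$$B - B' \;=\; 2\,(a_{100}a_{011} - a_{001}a_{110}), \qquad B + B' \;=\; 2\,(a_{010}a_{101} - a_{000}a_{111}),$$
so $B^2 - B'^2$ equals four times the product of these two differences. The expansion of $AC - A'C'$ produces eight quartic monomials, four of which cancel in antisymmetric pairs on inspection of their index patterns, and the remaining four regroup into precisely the same product $(a_{010}a_{101} - a_{000}a_{111})(a_{100}a_{011} - a_{001}a_{110})$. I expect this last regrouping to be the one step that benefits from careful bookkeeping; once it is in place, the invariance $B^2 - 4AC = B'^2 - 4A'C'$ is immediate. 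Equivalently, $B^2 - 4AC$ is the Cayley $2\times 2\times 2$ hyperdeterminant of the hypermatrix $(a_{ijk})$, whose $S_3$-invariance under permutation of the three axes is classical; the identity above is simply the explicit verification of this symmetry in the multiaffine setting.
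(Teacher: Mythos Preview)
Your proposal is correct. The identities $B-B'=2(a_{100}a_{011}-a_{001}a_{110})$ and $B+B'=2(a_{010}a_{101}-a_{000}a_{111})$ hold as stated, and the eight terms of $AC-A'C'$ do reduce to the same product after the two cancellations you anticipate, so $(e\ g)$-invariance and hence full $S_3$-invariance follow.

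The paper's own proof takes a blunter route: it simply expands $B^2-4AC$ in full as a quartic in the eight quantities $Z_e^{fg},\ldots,Z^{efg}$ and displays the resulting expression, which is manifestly symmetric in $\{e,f,g\}$ by inspection of each monomial. Your argument replaces that single large expansion by a pair of smaller factorizations, and it buys something the paper does not mention: the recognition of $B^2-4AC$ as the Cayley $2\times2\times2$ hyperdeterminant of the coefficient tensor $(a_{ijk})$, whose $S_3$-invariance is then a structural fact rather than a visual one. The paper's displayed formula is exactly that hyperdeterminant written out, so the two approaches meet at the same object; yours explains \emph{why} the symmetry is there, while the paper merely exhibits it.
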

\begin{proof}
With notation as in the previous paragraph, one calculates that the
discriminant $B^2 - 4AC$ equals
\showon
& &
  (Z_{e}^{fg} Z_{fg}^{e})^2
 +(Z_{f}^{eg} Z_{eg}^{f})^2
 +(Z_{g}^{ef} Z_{ef}^{g})^2
 +(Z_{efg} Z^{efg})^2\\
& &
 -2 Z_{e}^{fg} Z_{fg}^{e} Z_{f}^{eg} Z_{eg}^{f}
 -2 Z_{e}^{fg} Z_{fg}^{e} Z_{g}^{ef} Z_{ef}^{g}
 -2 Z_{f}^{eg} Z_{eg}^{f} Z_{g}^{ef} Z_{ef}^{g}\\
& &
 -2( Z_{e}^{fg} Z_{fg}^{e} 
   + Z_{f}^{eg} Z_{eg}^{f}
   + Z_{g}^{ef} Z_{ef}^{g} ) Z_{efg} Z^{efg}\\
& &
 +4 Z_{e}^{fg} Z_{f}^{eg} Z_{g}^{ef} Z_{efg}
 +4 Z_{fg}^{e} Z_{eg}^{f} Z_{ef}^{g} Z^{efg}.
\showoff
This is clearly symmetric under all permutations of the indices $\{e,f,g\}$,
as claimed.
\end{proof}

\begin{LMA}
Let $Z(y_1,...,y_m)$ be a multiaffine polynomial with positive coefficients,
and let $e,f,g$ be distinct indices.
Fix real values for all $y_c$ ($c\in E\drop\{e,f,g\}$).  Assume
that $\Delta Z_f\{e,g\}\geq 0$,
that $\Delta Z^f\{e,g\}\geq 0$, and
that $\Delta Z\{e,f\}\geq 0$ for all $y_g\in\RR$.
Then $\Delta Z\{e,g\}\geq 0$ for all $y_f\in\RR$.
\end{LMA}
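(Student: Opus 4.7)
The plan is to view $\Delta Z\{e,g\}$ as a quadratic polynomial in $y_f$, which is legitimate since $Z$ is multiaffine, and to mimic the calculation preceding Proposition~1 with the roles of $f$ and $g$ interchanged. Writing
$$\Delta Z\{e,g\} = A' y_f^2 + B' y_f + C',$$
the same derivation yields $A' = \Delta Z_f\{e,g\}$ and $C' = \Delta Z^f\{e,g\}$, while the explicit form of $B'$ will not be needed. The first two hypotheses of the lemma thus say precisely that $A' \geq 0$ and $C' \geq 0$.

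Next I would extract information from the third hypothesis. Writing $\Delta Z\{e,f\} = A y_g^2 + B y_g + C$ as in the introduction, nonnegativity of this quadratic for all real $y_g$ forces $A \geq 0$ and $B^2 - 4AC \leq 0$ (since the leading coefficient of a nonnegative real quadratic must be nonnegative, and the discriminant nonpositive, with the linear degenerate case handled similarly). The crucial step is then to invoke Proposition~1: the discriminant of $\Delta Z\{e,f\}$ in $y_g$ is symmetric under permutations of $\{e,f,g\}$, so it equals the discriminant $(B')^2 - 4A'C'$ of $\Delta Z\{e,g\}$ in $y_f$. Hence $(B')^2 - 4A'C' \leq 0$.

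To conclude I would split into two cases. If $A' > 0$, the quadratic $A' y_f^2 + B' y_f + C'$ has positive leading coefficient and nonpositive discriminant, so it is $\geq 0$ throughout $\RR$. If $A' = 0$, then $(B')^2 \leq 4 A' C' = 0$ forces $B' = 0$, and so $\Delta Z\{e,g\} = C'\geq 0$ identically in $y_f$.

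The key insight — rather than any genuinely difficult obstacle — is the use of Proposition~1. Without the permutation symmetry of the discriminant, the inequality furnished by the third hypothesis would appear to concern a different quadratic than the one we must control, and a second laborious expansion would seem necessary to compare them. The degenerate case $A'=0$ is a minor wrinkle, resolved automatically by the discriminant bound; and the positive-coefficient assumption on $Z$ plays no active role in this particular argument, although it presumably matters for the downstream applications.
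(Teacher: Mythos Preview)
Your proof is correct and follows essentially the same route as the paper's: write $\Delta Z\{e,g\}$ as a quadratic in $y_f$, identify the outer coefficients with $\Delta Z_f\{e,g\}$ and $\Delta Z^f\{e,g\}$, invoke Proposition~1 to transfer the nonpositive discriminant from the hypothesis on $\Delta Z\{e,f\}$, and conclude by a case split on whether the leading coefficient vanishes. Your remark that the positive-coefficient hypothesis is not used in this lemma is also accurate.
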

\begin{proof}
We have $\Delta Z\{e,g\} = A y_f^2 + B y_f + C$ with real coefficients
$A\geq 0$, $B$, and $C\geq 0$.  By Proposition 1,
the discriminant of $\Delta Z\{e,g\}$ with respect to $y_f$ equals
the discriminant of $\Delta Z\{e,f\}$ with respect to $y_g$.
By the hypothesis of the Lemma, this discriminant is nonpositive.
Consequently, $\Delta Z\{e,g\}$ does not change sign for $y_f\in\RR$.
It follows that if either $A=0$	or $C=0$ then $B=0$.
Therefore, $\Delta Z\{e,g\}\geq 0$ for all $y_f\in \RR$, as was to be shown.
\end{proof}

\begin{THM}
Let $Z(y_1,...,y_m)$ be a multiaffine polynomial with positive coefficients.
The following conditions are equivalent:\\
\textup{(a)}\ $Z$ is stable;\\
\textup{(b)}\ $Z$ has the strong Rayleigh property;\\
\textup{(c)}\ for every index $e\in E$, both $Z_e$ and $Z^e$
have the strong Rayleigh property, and for some pair of indices $\{e,f\}
\subseteq E$, $\Delta Z\{e,f\} \geq 0$ for all $y_c\in\RR$
($c\in E\drop\{e,f\})$.
\end{THM}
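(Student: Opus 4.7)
My plan is to use Br\"and\'en's result \cite{Br} for the equivalence (a)$\iff$(b), and then to establish (b)$\iff$(c) by two implications, with Lemma 2 doing the real work in the direction (c)$\implies$(b).

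For (b)$\implies$(c), the existence of a pair $\{e,f\}$ with $\Delta Z\{e,f\}\geq 0$ for all real values of the remaining variables is immediate from the definition of the strong Rayleigh property. For the remaining part, I would pass through (a): since $Z$ is stable, so are the restriction $Z^e$ (trivially, by restricting the open domain) and the partial derivative $Z_e$ (a standard closure property of stable polynomials). These smaller polynomials are again multiaffine with positive coefficients, so applying (a)$\implies$(b) to each of them shows that $Z_e$ and $Z^e$ inherit the strong Rayleigh property.

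The heart of the argument is (c)$\implies$(b): propagate the inequality $\Delta Z\{e,f\}\geq 0$ from the single distinguished pair to every pair in $E$, in two waves of applications of Lemma 2. First, for any $g\in E\drop\{e,f\}$, Lemma 2 applies directly: the three hypotheses $\Delta Z_f\{e,g\}\geq 0$, $\Delta Z^f\{e,g\}\geq 0$, and $\Delta Z\{e,f\}\geq 0$ for all $y_g\in\RR$ come respectively from the strong Rayleigh property of $Z_f$, of $Z^f$, and from the hypothesis on the distinguished pair. The conclusion $\Delta Z\{e,g\}\geq 0$ for all $y_f\in\RR$ — combined with the arbitrary choice of the remaining real variables allowed by the lemma — promotes itself to the full strong Rayleigh inequality for every pair $\{e,g\}$ containing $e$.

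In the second wave, for any pair $\{g,h\}\subseteq E\drop\{e\}$, I apply Lemma 2 again, now with the triple $(g,e,h)$ playing the role of $(e,f,g)$. The first two hypotheses $\Delta Z_e\{g,h\}\geq 0$ and $\Delta Z^e\{g,h\}\geq 0$ come from the strong Rayleigh property of $Z_e$ and $Z^e$, and the third, $\Delta Z\{g,e\}\geq 0$ for all $y_h\in\RR$, was just established in the first wave. The conclusion $\Delta Z\{g,h\}\geq 0$ for all $y_e\in\RR$ completes the verification of the strong Rayleigh property for $Z$. The one subtle point is to recognize that exactly two passes of Lemma 2 suffice — a single pass handles only the pairs meeting the distinguished element — and to match the index relabellings cleanly; beyond this, no new analytic input is required.
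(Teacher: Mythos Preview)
Your proof is correct and follows essentially the same route as the paper: Br\"and\'en for (a)$\iff$(b), closure of the minors for (b)$\implies$(c), and two passes of Lemma 2 to propagate the single Rayleigh inequality to all pairs for (c)$\implies$(b). The only cosmetic differences are that the paper cites \cite{CW} directly for preservation of the strong Rayleigh property under $Z\mapsto Z_e,\,Z^e$ (rather than routing through stability), and in its second pass it treats pairs $\{f,g\}$ ``by symmetry'' before handling $\{g,h\}$ disjoint from $\{e,f\}$, whereas your second wave absorbs both cases at once.
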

\begin{proof}
The equivalence of conditions (a) and (b) is Theorem 5.6 of Br\"and\'en
\cite{Br}.
That (b) implies (c) is immediate, since the operations
$Z\mapsto Z_e$ and $Z\mapsto Z^e$ both preserve the strong Rayleigh
property (see Proposition 5.4 of \cite{CW}).  It remains only to show that
(c) implies (b).  Assume the hypothesis of (c).
Then $\Delta Z\{e,f\}\geq 0$ for all $y_c\in\RR$ ($c\in E\drop\{e,f\}$),
by hypothesis.  By one application of Lemma 2, we deduce that
for any $g\in E\drop\{e,f\}$,
$\Delta Z\{e,g\}\geq 0$ for all $y_c\in\RR$ ($c\in E\drop\{e,g\}$).
The cases for $\Delta Z\{f,g\}$ follow by symmetry.
For a pair $\{g,h\}$ disjoint from $\{e,f\}$ we apply Lemma 2 again,
beginning with $\Delta Z\{e,g\}\geq 0$ for all $y_c\in\RR$ ($c\in E\drop\{e,g\}$),
to deduce that $\Delta Z\{g,h\}\geq 0$ for all $y_c\in\RR$ ($c\in E\drop\{g,h\}$).
Thus, $Z$ is strongly Rayleigh.
\end{proof}
The advantage of the criterion (c) is that only one Rayleigh difference needs
to be checked, instead of the $\binom{m}{2}$ that are required \emph{a priori}.
The strong Rayleigh condition on the minors $Z_e$ and $Z^e$ can be assumed
if one is proceeding by induction on $|E|$.  We give a few applications of
this method below.

The above concepts apply to a matroid $\M$ with ground-set $E$ through
its \emph{basis-generating polynomial}
$$M(\y) = \sum_{B} \y^B,$$
in which the sum is over the set of all bases $B$ of $\M$, and
$\y^B = \prod_{e\in B} y_e$.
These polynomials are clearly homogeneous and multiaffine.
Thus, $M(\y)$ has the HPP if and only if it is strongly Rayleigh
(Corollary 5.14 of \cite{Br}).
Notice that $M^e$ is the basis-generating polynomial of the deletion
$\M\drop e$, and that $M_e$ is the basis-generating polynomial of the
contraction $\M/e$.  The Potts model partition function of $\M$ is a
much more informative polynomial to which some of the above conditions
can also be applied -- see \cite{So,Wa3}.

The study of matroids with the HPP was begun in \cite{COSW},
from which we require the following results.\\
$\bullet$\ The class of HPP matroids is closed by taking duals and minors.\\
$\bullet$\ Every matroid with at most six elements is HPP.\\
$\bullet$\ Every matroid with rank (or corank) at most two is HPP.\\
$\bullet$\ Up to duality, there are only four matroids with seven
elements for which the HPP is undetermined:\ these are
$\F_7^{-4}$, $\W^{3+}$, $\W^3+e$, and $\P'_7$ depicted in Figure 1.
They all have rank three.\\
$\bullet$\ The rank three seven-element matroids
$\F_7^{-5}$ and $\P''_7$ are HPP.  (The precise structure of these
matroids is not important here -- see \cite{COSW}.)
\begin{figure}
\pix{1.}{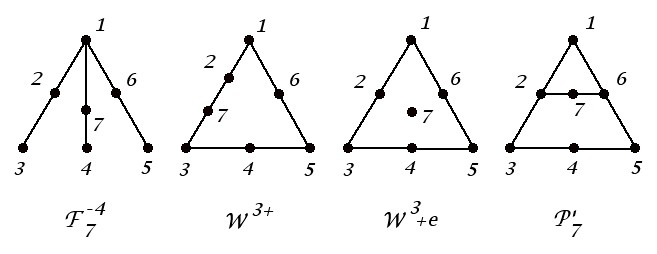}
\caption{The four undetermined cases with $|E|\leq 7$.}
\end{figure}

The HPP remained unsettled in \cite{COSW} for three more small matroids.
These are the V\'amos cube $\V_8$ (of rank 4) and the two one-point deletions
$n\P\drop 1$ and $n\P\drop 9$ (of rank 3) of the non-Pappus matroid $n\P$
depicted in Figure 2.  (The non-Pappus matroid itself is not HPP.)
\begin{figure}
\pix{1.}{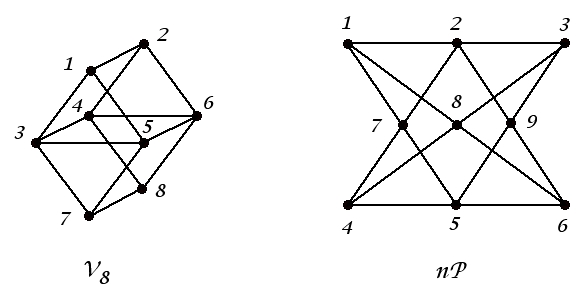}
\caption{The V\'amos cube and the non-Pappus matroid.}
\end{figure}
Using the criterion of Theorem 3(c) we now show that all seven
of these matroids do have the HPP/strong Rayleigh property.
To show that $\Delta M\{e,f\} \geq 0$ for all $y_c\in\RR$ ($c\in E\drop
\{e,f\}$), we write $\Delta M\{e,f\}$ as a sum of squares of polynomials.
(Though not needed for the present purposes, when doing this one should be aware
of the theory developed around Hilbert's 17th problem;\ see \cite{Re},
for example.)\\

\noindent$\bullet$\ \ 
$\M=\F_7^{-4}$:\\
Every one-point deletion or contraction has six elements, so is
strongly Rayleigh by the above remarks.  A short calculation verifies that
\begin{eqnarray*}
& & \Delta M \{ 1,2 \} \\
&=& \frac{1}{2}(y_3y_7+y_5y_7+y_4y_5+y_3y_4+y_3y_5+y_3y_6+y_6y_7+y_4y_6)^2 \\
& & +\frac{1}{2}(y_3y_7+y_3y_4+y_3y_6+y_3y_5)^2 \\
& & +\frac{1}{2}(y_4y_6-y_5y_7)^2 +\frac{1}{2}(y_6y_7-y_4y_5)^2
\end{eqnarray*}
By Theorem 3, it follows that $\F_7^{-4}$ is strongly Rayleigh.\\

\noindent$\bullet$\ \ 
$\M=\W^{3+}$:\\
Every one-point deletion or contraction has six elements, so is
strongly Rayleigh by the above remarks.  A short calculation verifies that
\begin{eqnarray*}
& & \Delta M \{ 1,2 \} \\
&=& \frac{1}{2}(y_6y_3+y_4y_7+y_3y_5+y_5y_7+y_4y_6+y_6y_7+y_4y_3+y_4y_5)^2  \\
& & +\frac{1}{2}(y_6y_3+y_3y_5+y_4y_3+y_4y_5)^2+\frac{1}{2}(y_4y_7+y_5y_7+y_6y_7)^2
    +\frac{1}{2}y_4^2y_6^2 
\end{eqnarray*}
By Theorem 3, it follows that $\W^{3+}$ is strongly Rayleigh.\\

\noindent$\bullet$\ \ 
$\M=\W^3+e$:\\
Every one-point deletion or contraction has six elements, so is
strongly Rayleigh by the above remarks.  A short calculation verifies that
\begin{eqnarray*}
& & \Delta M \{ 1,2 \} \\
&=& \frac{1}{2}(y_3y_7+y_3y_6+y_4y_7+y_4y_6+y_3y_4+y_6y_7+y_5y_7+y_3y_5+y_4y_5)^2 \\
& & +\frac{1}{2}(y_3y_7+y_3y_6+y_3y_4+y_4y_5+y_3y_5)^2+\frac{1}{2}(y_4y_6-y_5y_7)^2 \\
& & +\frac{1}{2}y_4^2y_7^2+\frac{1}{2}y_6^2y_7^2
\end{eqnarray*}
By Theorem 3, it follows that $\W^3+e$ is strongly Rayleigh.\\

\noindent$\bullet$\ \ 
$\M=\P'_7$:\\
Every one-point deletion or contraction has six elements, so is
strongly Rayleigh by the above remarks.  A short calculation verifies that
\begin{eqnarray*}
& & \Delta M \{ 1,2 \} \\
&=& \frac{1}{2}(y_3y_7+y_4y_6+y_4y_7+y_3y_4+y_5y_7+y_5y_3+y_6y_3+y_4y_5)^2 \\
& & +\frac{1}{2}(y_3y_4+y_3y_7+y_5y_3+y_6y_3+y_4y_5)^2 \\
& & +\frac{1}{2}(y_4y_6-y_5y_7)^2 +\frac{1}{2}y_4^2y_7^2
\end{eqnarray*}
By Theorem 3, it follows that $\P'_7$ is strongly Rayleigh.\\

\noindent$\bullet$\ \ 
$\M=n\P\drop 1$:\\
Every one-point contraction has rank $2$, so is strongly Rayleigh.
Every one-point deletion is isomorphic to one of $\F_7^{-4}$,
$\W_3+e$, $\F_7^{-5}$, $\P'_7$, or $\P''_7$.  By the results above,
these are all strongly Rayleigh.  A short calculation verifies that
\begin{eqnarray*}
& & \Delta M \{ 2,4 \} \\
&=& \frac{1}{2}(y_3y_7+y_3y_9+y_5y_9+y_6y_7+y_7y_8+y_7y_9 \\
& & \hspace{1cm} +y_5y_7+y_5y_8+y_6y_8+ y_8y_9+y_3y_6+y_3y_5)^2\\
& & +\frac{1}{2}(y_6y_7+y_7y_8+y_7y_9+y_3y_7+y_5y_7)^2 \\ 
& & +\frac{1}{2}(y_3y_9+y_5y_9+y_3y_5-y_6y_8)^2
    +\frac{1}{2}(y_5y_8-y_3y_6)^2+\frac{1}{2}y_8^2y_9^2
\end{eqnarray*}
By Theorem 3, it follows that $n\P\drop 1$ is strongly Rayleigh.\\

\noindent$\bullet$\ \ 
$\M=n\P\drop 9$:\\
Every one-point contraction has rank $2$, so is strongly Rayleigh.
Every one-point deletion is isomorphic to one of $\F_7^{-4}$ or  $\P'_7$.
By results above, these are both strongly Rayleigh.
A short calculation verifies that
\begin{eqnarray*}
& & \Delta M \{1,2\} \\
&=& \frac{1}{2}(y_4y_6+y_3y_8+y_3y_7+y_6y_7+y_7y_8+y_5y_8 \\
& & \hspace{1cm} +y_3y_6+y_3y_5+y_4y_5+y_5y_6+y_4y_8+y_3y_4)^2 \\
& & +\frac{1}{2}(y_3y_8+y_3y_6+y_3y_7+y_3y_5+y_3y_4+y_4y_8)^2 \\ 
& & +\frac{1}{2}(y_4y_5+y_4y_6+y_5y_6-y_7y_8)^2+\frac{1}{2}(y_5y_8-y_6y_7)^2 \\
\end{eqnarray*}
By Theorem 3, it follows that $n\P\drop 9$ is strongly Rayleigh.\\

\noindent$\bullet$\ \ 
$\M=\V_8$:\\
The V\'amos cube is self-dual.
Every one-point contraction is isomorphic to $\F_7^{-4}$ or  $\F_7^{-5}$.
Every one-point deletion is isomorphic to $(\F_7^{-4})^*$ or  $(\F_7^{-5})^*$.
By the results above, these are all strongly Rayleigh.
A short calculation verifies that
\begin{eqnarray*}
& & \Delta M \{ 1,2 \} \\
&=&\frac{1}{4}(y_3y_4y_5+y_3y_4y_6+y_3y_4y_7+y_3y_4y_8+y_3y_5y_7+y_3y_5y_8 \\
& & \hspace{1cm} +y_3y_6y_7+y_3y_6y_8 +y_4y_5y_7+y_4y_5y_8+y_4y_6y_7 \\
& & \hspace{1cm} +y_4y_6y_8+y_5y_6y_7+y_5y_6y_8+y_5y_7y_8+y_6y_7y_8)^2 \\
& & +\frac{1}{4}(y_3y_4y_5+y_3y_4y_6+y_3y_4y_7+y_3y_4y_8 \\
& & \hspace{1cm} +y_3y_5y_7+y_3y_5y_8+y_4y_6y_7+y_4y_6y_8)^2 \\
& & +\frac{1}{4}(y_3y_4y_5+y_3y_4y_6+y_3y_4y_7+y_3y_4y_8 \\
& & \hspace{1cm} +y_3y_6y_7+y_3y_6y_8+y_4y_5y_8+y_4y_5y_7)^2 \\
& & +\frac{1}{4}(y_3y_4y_5+y_3y_4y_6+y_3y_4y_7+y_3y_4y_8 \\
& & \hspace{1cm} -y_6y_7y_8-y_5y_7y_8-y_5y_6y_7-y_5y_6y_8)^2   \\
& & +\frac{1}{8}(y_3y_6y_7-y_3y_5y_8+y_4y_6y_7-y_4y_5y_8+y_6y_7y_8-y_5y_6y_8)^2 \\
& & +\frac{1}{8}(y_3y_5y_7-y_3y_6y_8-y_4y_6y_8+y_4y_5y_7+y_5y_7y_8-y_5y_6y_8)^2 \\
& & +\frac{1}{8}(y_3y_5y_7+y_3y_6y_7+y_4y_6y_8+y_4y_5y_8+y_5y_6y_7+y_5y_7y_8)^2 \\
& & +\frac{1}{8}(y_3y_5y_8+y_3y_6y_8+y_4y_6y_7+y_4y_5y_7+y_5y_6y_7+y_6y_7y_8)^2 \\
& & +\frac{1}{8}(y_3y_6y_7-y_3y_5y_8+y_4y_6y_7-y_4y_5y_8+y_5y_6y_7-y_5y_7y_8)^2 \\
& & +\frac{1}{8}(y_3y_5y_7-y_3y_6y_8-y_4y_6y_8+y_4y_5y_7+y_5y_6y_7-y_6y_7y_8)^2 \\ 
& & +\frac{1}{8}(y_3y_5y_7+y_3y_6y_7+y_4y_6y_8+y_4y_5y_8+y_5y_6y_8+y_6y_7y_8)^2 \\
& & +\frac{1}{8}(y_3y_5y_8+y_3y_6y_8+y_4y_6y_7+y_4y_5y_7+y_5y_6y_8+y_5y_7y_8)^2 
\end{eqnarray*}
By Theorem 3, it follows that $\V_8$ is strongly Rayleigh.

\end{document}